\newtheorem{theorem}{Théorème}
\newtheorem{proposition}{Proposition}
\author{Alessandra Cauli} 
\address{Dipartimento di Scienze Matematiche, Politecnico di Torino, Corso Duca degli Abruzzi 24, 10129 Torino, Italy}
\email{alessandra.cauli@unito.it}
\title{Sur la résolution des équations de troisième et quatrième degré}
\date{}
\begin{document}

\maketitle

\section{Abstract}
This paper is the authored translation in French language which it is its second edition.
\vspace{10pt} \\
\textbf{Keywords:} algebraic equations; radicals; the third and fourth degree; History of \\ Mathematics; Galois Theory.

\section{Résumé}

Cet article encadre la résolution des équations de troisième et quatrième degré par radicaux. On présente certain détails historiques sur ce problème fondamental. En outre, il s'agit de méthodes pratiques pour la résolution des équations de troisième et quatrième degré par le point de vue algébrique et nous allons introduire des résultats sur les équations de majeur degré. Les Mathématiques et les nombres complexes et les résultats des règles des calculassions aussi bien d'avoir complété la théorie des équations entaillent des modèles mathématiques en différents contextes historiques et chronologiques au plusieurs niveaux de recherches. Le sujet rejoindrait un auditorium de référence pour les auteurs, enseigneurs et étudiants au moment où le contexte de collocation avantage les équations algébriques et les solutions par les radicaux.
\vspace{10pt} \\
\textbf{Clés:} équations algébriques; radicaux; troisième et quatrième degré; histoire des \\ mathématiciens; théorie de Galois.

\section{Introduction}

L'étude des équations algébriques est trés fondamental et c'est un concept basilaire des Mathématiques autant que tous les problèmes seront irrésolubles pendant la vie. John L. Berggren confirme en l'Encyclopædia Britannica que "l'algèbre élémentaire est trés importante surtout pour les Mathématiciens et pour les sciences naturelles, les sciences computationnelles, l'Ingénierie, l'Economie et les Affaires Financiéres. Autrement dit que l'écriture, l'algèbre élémentaire est un trait d'union entre la Civilisation et la Pensée Scientifique. Jeuneuses peuples de civilisations des Babyloniens, des Illinois, des Indes, Chinois et Islamiques, offriront le développement de l'algèbre élémentaire pour un système établie, stable et indépendant pour la réalisation et la représentation des nombres réels et un symbolisme pour la considération des liens entre ces nombres et une opération". Les équations algébriques sont trés fondamentales pour la résolution des problèmes quotidiens et des modèles mathématiques. Le problème de la résolution des équations algébriques s'approche par le point de vue de l'Histoire des Mathématiques aux mathématiciens pour les siècles afin de trouver des solutions aux problèmes les plus énormes, surtout pour les équations de majeur degré. On retrouvera des résolutions avec formules et méthodes qui peuvent improuver les arguments et l'analyse des différents contextes historiques et chronologiques pendant lesquels les grands mathématiciens ont étés impliqués sur la résolution des équations algébriques. Nous ferons référence à \cite{Boyer} pour tous les détails historiques que nous présenterons. Pour ce qui concerne la résolution des équations algébriques en général, le théorème fondamental de l'algèbre nous afferme qu'il existe une solution réelle ou complexe d'une équation algébrique $f(z)=0$ de degré $n\geq 1$ en la seule variable $z$ avec des coefficients complexes ou réels en particulier. Le théorème fondamental de l'algèbre a été démontré pour la première fois en 1799 par le mathématicien allemand Carl Friedrich Gauss en sa thèse de Doctorat. En 1806, Jean-Robert Argand publiait une démonstration de ce théorème pour les cas des coefficients complexes. Aprés, en 1816, Gauss publiait deux autres démonstrations, une autre en 1850. Avec ces contrebutes, des autres démonstrations seront proposées. Nous connaissons les formules \[x=-\frac{b}{a}\] et \[x=-b\pm \frac{\sqrt{b^{2}-4ac}}{2a}\] qui donnent les solutions de l'équation algébrique des premier et deuxième degrés, respectivement de $ax+b=0$ et $ax^2+bx+c=0$. Une première épreuve de résolution était depuis quatre-mille ans par moyen des tablettes d'argile des Babyloniens quand ils utilisaient la méthode de la "complétion des carrés" pour la résolution des équations de deuxième degré. En effet, il semblait qu'ils voulaient reconduire les équations de deuxiéme degré au problème typique de trouver deux nombres dont la somme et le produit sont connus. Ces nombres sont exactement les deux solutions de l'équation de deuxième degré écrite en la forme $x^2-(somme)x+(produit)=0$. Cette méthode était probablement déjà connue à Héron D'Alexandrie ($\sim$ 10-70 d.C.) et Diophante D'Alexandrie (né autour les 201 et 215 d.C., mort à l'age de quatre-vingt-quatre ans, probablement autour les 285 et 299 d.C.) qui introduisaient un symbolisme pour ces arguments. La caractérisation indispensable de l'algèbre est l'utilisation des simples symboles pour représenter des quantités numériques et des opérations mathématiques. Aujourd'hui, les premières lettres de l'alphabet $(a,b,c,...)$ représentent typiquement les données, mais arbitrairement les nombres d'un problème et les lettres à la fin de l'alphabet, en particulier $x, y$ et $z$ représentent des quantités ou des variables. La représentation géométrique des quantités algébriques a énormément développé et amplis l'environnement des applications algébriques. 
Ce manuscrit est organisé à travers cinq sections: aprés une brève introduction et un flashback sur les équations de deuxième degré, la deuxième partie traite des résolutions des équations de troisième degré et de leur histoire, la troisième est dédiée à la résolution des équations de quatrième degré et de leur histoire, la quatrième contient des notions de la théorie de Galois sur la résolution des équations algébriques par moyen des radicaux, nous allons conclure avec des résultats que nous avons sur les équations de majeur degré.

\section{Résolution des équations de troisième degré}

\subsection{Histoire de l'invention de la formule de solution}

L'équation cubique $x^3+ax^2+bx+c=0$, à exception de cases spéciales, a mis au défi les mathématiciens lorsque à la période des Babyloniens, depuis quatre-mille ans, jusqu'au moment où Luca Pacioli (1445-1514) écriait en la Summa, en 1494, son hypothèse que l'équation cubique était impossible. Plusieurs détails sur la vie et les oeuvres de Luca Pacioli on peut les retrouver en \cite{Sangster}. En ce qui concerne l'histoire de la solution des équations de troisième degré, la plupart des mathématiciens grecs et arabes était souvent convolés d'après Archiméde de Syracuse, mais ils cherchaient à résoudre uniquement des cas particulières, sans réussir en une méthode générale. Le contrebute du mathématicien Perse Omar Khayyam était trés importante (1100 ca.), on peut retrouver plusieurs détails en \cite{Cooper}. Il élaborait l'oeuvre ``L'algèbre'' pour décrire une méthode générale et reconnaitre quand les équations de troisième degré aient des racines positives en donnant une classification des équations en treize cas à exception des coefficients négatifs. Egalement, la résolution des équations cubiques généralisait la méthode des intersections coniques. Nous présentons la procédure d'Omar Kayyam en utilisant des modernes concepts et leurs dénotations. Considérant l'équation $x^3+ax^2+bx+c=0$, on prend $x^2=2py$ et on obtient $2pxy+2apy+b^2$ $x+c^3=0$ qui représente une hyperbole tandis que la précédente est une parabole. Si les deux courbes seront dessinées dans un même système des coordonnées de référence, les points d'intersection sont les racines de l'équation considérée. Elles peuvent être utilisées également pour d'autres sections coniques. Il était Scipione del Ferro (1465-1526), un professeur des mathématiques à Boulogne réussit à résoudre l'équation cubique $x^3+px=q$ autour 1515. En effet, il ne publiait pas la méthode de résolution parce que pendant celle période les découvertes furent tenues cachées et utilisée pour défier les duelles à résoudre les mêmes problèmes. Scipione Del Ferro révélait la méthode à la fin de sa vie pour un étudiant,  Antonio Maria Fior. Les nouvelles commencent à se diffuser et porteront Niccolò Fontana de Bréche (1499-1559), autrement dit Tartaglia pour un mot tombé en bataille, en 1512, pour la défense de Bréche de la part de la France, pour la retrouvée de la solution en 1530: il résoudrait les équations de type $x^3+px=q$ et $x^3+px^2=q$ avec $p$ et $q$ positifs. Il dit d'avoir résolu le problème mais la formule restait secrète. Fior douillait Tartaglia en 1535 et chacun des défiants proposaient trente problèmes. Les nouvelles de l'exode de Tartaglia en la défie rejoignaient Girolamo Cardano (1501-1576), un physique à la Courte de Milan, philosophe, astrologue et mathématicien. Tartaglia, aprés l'insistance de Cardano, finalement lui révélait la méthode de la résolution, mais Cardano promisait de tenir en secret celle méthode. En effet, Tartaglia, au but d'écrire la formule, donnait à Cardano la solution en la forme d'une poésie. Initialement, Cardano ne comprenait pas et questionnait l'aide de Tartaglia qui demandait une explication avec plusieurs détails. Pour cette raison, Cardano assemble son pupille Ludovico Ferrari (1522-1565), commenéait à travailler sur les équations de troisième degré en trouvant donc une démonstration de la solution. Ferrari découvrait en outre la solution de l'équation de quatrième degré qui lui amenait dans le monde des grands mathématiciens. Il y avait un problème: un passage de la solution regardait la formule de la résolution des équations de troisième degré que Cardano avait promis de ne pas étendre. Epargné de l'impossibilité de diffuser des nouvelles découvertes et en sachant que Del Ferro trouvait la solution avant Tartaglia, Cardano et Ferrari allaient rencontrer Annibale Della Nave, neuf de Del Ferro et son successeur à l'Université de Boulogne. Della Nave leur résiliait un occulte manuscrite avec la solution de l'équation, qui était la même trouvée par Tartaglia. En 1545, Cardano publiait la version de la résolution des équations de troisième degré accrédité à Ferrari.
Tartaglia, cependant, pensait d'être défraie ainsi qu'il commenéait une longue ébatailleé avec eux. Cardano et Ferrari conclurent in a Courte d'une église des Franciens à Milan avec beaucoup de gens et Ferrari était le champion de la défie. Donc, Tartaglia décidait de laisser Milan et fut mort avant de publier un traité des équations de troisième degré ainsi que les formules sont aujourd'hui reportées en aussitôt des livres comme les connues Formules de Cardano qui éclusaient les contributions de Del Ferro et de Tartaglia. Pour cette raison, Cardano est parfois renommée le voleur des formule, mais il était accusé d'ingénuités à cause de ne pas rebuter à soi-même la découverte en le traité. Parfois, il peut etre mieux que les appeler les formules de Del Ferro-Tartaglia-Cardano: trois auteurs pour une équation de troisième degré. Si on veut écrire en langage moderne, la solution de Cardano pour les équations cubiques de type $x^3+px=q$ est:
\[x=\sqrt[3]{\sqrt{\left( \frac{p}{3}\right)^3+\left( \frac{q}{2}\right)^2}+\frac{q}{2}}-\sqrt[3]{\sqrt{\left( \frac{p}{3}\right)^3+\left( \frac{q}{2}\right)^2-\frac{q}{2}}}\] et la formule pour l'équation cubique de type $x^3+px^2=q$ est: \[x=\sqrt[3]{\sqrt{\left( \frac{q}{2}\right)^2-\left( \frac{p}{3}\right)^{3}}+\frac{q}{2}}-\sqrt[3]{\sqrt{\left( \frac{q}{2}\right)^2+\left( \frac{p}{3}\right)^3-\frac{q}{2}}}.\]
Regardez en outre \cite{Chahal}, \cite{Cox} et \cite{Grant} pour une détaillé histoire des formules de Cardano. Le mathématicien que pour la première fois reconnait le besoin d'épandre les nombres aprés connus comme d'autres nombres était Rafael Bombelli (1526-1573), un mathématicien de Boulogne. Bombelli, en \textit{L'algèbre}, proposait de compléter les différents cas de résolution des équations de troisième degré, les cas irréductibles, c'est-à-dire que la formule de Cardano donnait la racine carrée d'un nombre négatif qui se représente quand $\left( \frac{p}{3}\right)^{3}+\left( \frac{q}{2}\right)^{2}<0$. En le premier livre de \textit{L'algèbre}, Bombelli examine les racines imaginaires des équations et revient sur la dénomination de ces nombres dits "complexes". En effet, il travaillait avec les radicaux également qu'ils étaient réels, une totale abstraction pour un mathématicien de cette période. Bombelli, toutefois, comprenait le mérite d'introduire en les Mathématiques les nombres complexes et les résultats des régles des calculassions aussi bien d'avoir complété la théorie des équations de troisième degré en dépoilant tous les cas qui seront présentés tandis que Cardano et Ferrari ne développaient pas une théorie complète. Pour en savoir plus sur les travaux de Bombelli, regardez aussi \cite{Gavagna,Gavagna2}.

\subsection{La résolution ordinaire de l'équation cubique}

Initialement, la résolution de l'équation de troisième degré $x^3+ax^2+bx+c=0$ (le coefficient de $x^3$ est considéré égale à 1) comporte la substitution $x=y-\frac{a}{3}$ sans le terme quadratique: 
\[ \left(y-\frac{a}{3}\right)^3+a\left(y-\frac{a}{3}\right)^2+b\left(y-\frac{a}{3}\right)+c=0 \]
\[ y^3-ay^2+\frac{\left(a^{2}y\right)}{3}-\frac{a^3}{27}+ay^2-\frac{2}{3}a^2 y+\frac{a^3}{9}+by-\frac{ab}{3}+c=0\]
\[ y^3-\frac{a^2 y}{3}+by+\frac{2a^3}{27}-\frac{ab}{3}+c=0 \]
\[ y^3+\left(b-\frac{a^2}{3}\right)y+\frac{2a^3}{27}-\frac{ab}{3}+c=0 \]
\[ y^3+py+q=0 \]
où
\vspace{5pt}
\begin{equation*}
\begin{cases}
p=b-\frac{a^2}{3}= \\
c-\frac{ab}{3}+\frac{2a^3}{27}.
\end{cases}
\end{equation*}
Nous allons trouver la solution $y$ comme la somme de deux nombres $u$ et $v$: $y=u+v$. Par substitution, nous avons: 
$0=y^3+py+q=\left(u+v\right)^3+p(u+v)+q=u^3+3u^{2}v+3uv^2+v^3+p(u+v)+q=((u^3+v^3+q))+(u+v)((3uv+p))$.
Cette expression est une équation en deux variables. Nous fixons par exemple u et nous allons obtenir une équation de troisième degré en $v$ qui est équivalent au problème initial. Nous remarquons qu'on peut trouver une combinaison de $u$ et $v$ qui sont calculé par moyens élémentaires. On suppose que $u$ et $v$ enlient les termes entre parenthèses, $u^3+v^3+q=0$ et $3uv+p=0$, c'est-à-dire:
\begin{equation*}
\begin{cases} 
u^3+v^3=-q \\
uv=-\frac{p}{3}.
\end{cases}
\end{equation*}
La résolution du système comporte à considérer la troisième puissance de la deuxième équation:
\begin{equation*}
\begin{cases} 
u^3+v^3=-q \\
u^{3}v^3=-\frac{p}{3}.
\end{cases}
\end{equation*}
Nous connaissons la somme et le produit de $u^3$ et $v^3$. Donc, l'équation $z^2+qz-\frac{p^3}{27}$ est résolue avec $z=-\frac{q}{2}\pm \sqrt{\frac{q^2}{4}+\frac{p^3}{27}}$. Nous avons $u^3=-\frac{q}{2}+\sqrt{\frac{q^2}{4}+\frac{p^3}{27}}$ et $v^3=-\frac{q}{2}-\sqrt{\frac{q^2}{4}+\frac{p^3}{27}}$, desquelles nous allons extraire les racines cubique et nous rappelons que $y=u+v$ et $y=\sqrt[3]{-\frac{q}{2}+\sqrt{\frac{q^2}{4}+\frac{p^3}{27}}}+\sqrt[3]{-\frac{q}{2}-\sqrt{\frac{q^2}{4}+\frac{p^3}{27}}}$ qui est la célèbre formule de la résolution des équations de troisième degré. Cette formule donne neuf valeurs de $x$ au moment où les radicaux cubiques possédent trois valeurs, qui sont obtenus d'un d'eux en multipliant l'un ou l'autre des deux racines de l'unité complexes cubiques $\epsilon_1=-\frac{1}{2}+i\frac{\sqrt{3}}{2}$ et $\epsilon_2=-\frac{1}{2}-i\frac{\sqrt{3}}{2}$. Entre les neuf valeurs, quand-même, il est nécessaire choisir ces qui satisfont la condition $uv=-\frac{p}{3}$. Les autres sont négligées. Toutefois, on va sélectionner une valeur pour le premier radical, choisir $v_1=-\frac{p}{3u_1}$ et nous avons comme racines $x_1=u_1-\frac{p}{3u_1}$, $x_2=\epsilon_{1}u_1+\epsilon_{2}v_2$ et $x_3=\epsilon_{2}u_1+\epsilon_{1}v_1$.
En 1861, le mathématicien anglais A. Cayley proposait une méthode qui conduisait à la formule de résolution et les trois racines. Par conséquence, les méthodes de résolution retrouvés fournirons la formule de Cayley sans prendre en considération la formule de Cardano. En effet, Cayley était le premier à investiguer le concept abstract d'un group et il commenéait la classification des groupes d'un donné ordre purement abstract. Nous nous referons à la théorie de Cayley, voyez par exemple \cite{Cayley}.

\section{Résolution des équations de quatrième degré}

\subsection{Equations de quatrième degré avant la découverte de la formule de résolution}

Les premières exemples d'équations de quatrième degré sont retrouvées dans les travaux arabes datés autour 1000. Un problème d'Abu'l-Wefa rapporté par Abu'l-Faradsh en Fihrist (ca. 987) est le suivant: "Retrouver la racine d'un cube, la quatrième puissance et les expressions composée de ces deux puissances", c'est-à-dire résoudre l'équation $x^4+px^3=q$. Ce travail contenant la résolution du problème posé a été perdu, mais nous recueillons qu'il peut être résolu en inter sécant l'hyperbole $y^2+axy+b=0$ avec la parabole $x^2-y=0$. Omar Khayyam, en l'\textit{algèbre}, résoudrait les équations de quatrième degré par moyen des méthodes géométriques déjà utilisés pour les équations de troisième degré. Par exemple, nous considérons le problème de construire un $ABCD$ tel que $AB=AD=BC=10$ et d'aire égale à $90$, qui conduit l'auteur à l'équation $(100-x^2)(100-x)^2=8100$. Omar Khayyam observait qu'une racine de ces équations coincide avec une des intersections de l'hyperbole $(100-x)y=90$ avec le cercle $x^2+y^2=100$. Au contraire, l'équation $x^4-2(x^2+200x)=9999$ a été retrouvée en le travail astronomique de l'indien Bhaskara (ca. 1150) qui retrouvait une de ses racines de la façon suivante. Avant tout, il ajoutait l'expression $4x^2+1$ aux deux membres obtenant $x^4-2x^2-400x+4x^4+1=4x^4+10000$ de laquelle $x^4+2x^2+1=4x^4+400x+10000\Leftrightarrow (x^2+1)^2=(2x+100)^2 \Leftrightarrow x^2+1=2x+100 \Leftrightarrow x^2-2x-99=0$ dont Bhaskara considérait seulement la solution $x=11$.
Plus tard, en la Summa de Luca Pacioli nous trouverons un problème présenté en la manière suivante: "En supposant que le circuit équatorial terrestre est de 20.400 miles et que par un point de vue de deux voyageurs qui partent ou deux en mouvements, pour prendre la route et aller de l'ouest à l'est en voyageant tous les jours majeurement en progression arithmétique, ainsi que le voyage du premier jour est seulement d'un mile, l'itinéraire du deuxième jour de deux miles, ce de troisième jour de trois miles, etc.; l'autre observateur de l'est à l'ouest avec des voyages suivants plus longs, comment dire les cubes des nombres, c'est-à-dire avec le voyage du premier jour d'un mile, le deuxième de huit miles, le troisième de vingt-sept, le quatrième de soixante-quatre..., on veut retrouver combien jours ont durés les voyages". Ce problème conduit à l'équation $\frac{1}{4}x^4+\frac{2}{4}x^3+\frac{3}{4}x^2+\frac{2}{4}x=20400$ attribuable à $x^4+2x^3+3x^2+2x=8100$. La méthode de Luca Pacioli pour la résolution de cette équation consiste en l'artifice suivant: il adjoint un aux deux membres, en obtenant $(x^2+x+1)^2=81601$ et, en considérant seulement la racine arithmétique du deuxième membre, comme il était le costume des temps, il venait de retrouver $x^2+x+1=\sqrt{81601}$, donc $x=-\frac{1}{2}+\sqrt{-\frac{3}{4}+\sqrt{81601}}$.
\subsection{La résolution de l'équation de quatrième degré par la méthode de Ferrari}
L'histoire qui concerne la découverte de la formule de la résolution de l'équation de quatrième degré commencé en 1535 avec une des questions de Master Zuanne de Tonini de Coi, en particulier en la question XX du travail Quesiti et différents inventions de Tartaglia: "... faire vingt à partir de trois parties proportionnelles continuent en sorte de proportions, multiplier deux termines pour la donnée de huit". Les trois quantités recherchées $x,y,z$ seront: 
\[x+y+z=20\]
\[x:y=y:z\]
\[xy=8.\]
Desquelles nous allons obtenir les équations:
\[x^4+8x^2+64=20x^3,\]
\[y^4+8y^2+64=160y.\]
\vspace{15pt}
Si la solution fut accomplie en trois ans, il était arrivé que Master Zuanne composait la même question à d'autres mathématiciens, bien lui Cardano. La défie qu'il proposait, recueillie par Ludovico Ferrari qui resalait victorieux, effectivement résolvant les questions alliées aux équations de quatrième degré avec une régle générale validée. Les résultats de Ferrari étaient démontrés en relation des cas numériques proposé par Master Zuanne, en l'Ars Magna de Cardano. Le problème quété de la résolution de l'équation algébrique de quatrième degré de type $x^4+ax^2+bx+c=0$ qui pas tout le monde réussit à résoudre. Ferrari, à travers la substitution $y=x-\frac{q}{4}$, transforme l'équation de quatrième degré la plus générale $y^4+py^3+qy^2+ry+s=0$ en l'équation $x^4+ax^2+bx+c=0$. Les équations de type $x^4+ax^2+bx+c=0$ étaient irrésoluble, mais Ludovico Ferrrari les résoudrait par moyen d'une procédure que Lagrange définit comme la plus simple de ces retrouvées d'aprés. L'artifice de L. Ferrari permit de réduire le problème à la résolution d'une équation de troisième degré. La procédure pour résoudre l'équation $x^4+6x^3+36=60x$ était reportée dans le travail de Cardano en la manière suivante:
\begin{enumerate}
\item sommer la quantité $12x^2$ aux deux membres de l'équation pour que le terme à gauche soit un carré: $x^4+6x^3+12x^2+36=60x+12x^2$
\item sommer aux deux membres de l'équation les termines du carré avec une nouvelle inconnue $y$ autant que le membre à gauche de l'équation reste un carré ainsi que le membre à droite, pour etre: $(x^2+y+6)^2=2(y+3)x^2+60x+y^2+12y$ qu'on peut écrire comme: \\
$(x^2+y+6)^2=\frac{(4(y+3)^{2}x^2+120x(y+3)+2(y+3)(y^2+12y))}{2(y+3)}$
\item choisir $y\in \mathbb{R}$ tel que le trinomyale du membre à droite soit un carré, par exemple: $2(y+3)(y^2+12y)=30^2=900$
\item développer les calculs pour être: $y^3+15y^2+36y=450$; cette équation est naturellement résoluble à travers les formules pour la résolution des équations de troisième degré
\item replacer les valeurs obtenues au quatrième passage avec y qui apparaissent en l'équation du deuxiéme et extraire la racine carré des deux membres
\item le résultat du cinquième passage est une équation de deuxième degré qui doit être résolue pour retrouver la valeur requise de $x$. \\
La solution de Ludovico Ferrari pour l'équation de quatrième degré est donnée en \cite{Anglin}. Nous voulons observer que la méthode appliquée au cas particulier que nous venons de considérer est en tout cas générale et c'est applicable à chaque équation de la même forme. La première exposition de la résolution d'une équation de quatrième degré, complète et exhaustive, fut retrouvée en l'\textit{Algebra} de R. Bombelli qui parfois vient d'être considéré erronément l'auteur de la formule de résolution. A cause de la limite de ne pas considérer le nombre négatifs en coefficients, Bombelli traite quarante-deux cas.
\end{enumerate}
\subsection{La résolution d'une équation de quatrième degré par la méthode d'Euler}
Une méthode pour résoudre une équation de quatrième degré qui suit celle de Tartaglia pour l'équation de troisième degré était proposée par Euler. La méthode est de la manière qui suit.
L'équation donnée est $y^4+py^2+qy+r=0$ et nous regardons la solution en la forme $y=u+v+w$. Nous prenons les carrés, $y^2=u^2+v^2+w^2+2(uv+uw+vw)$, nous transportons le quatrième termine du deuxième membre au premier et nous considérons les carrés: $y^4-2(u^2+v^2+w^2)y^2+(u^2+v^2+w^2)^2=4(u^{2}v^2+u^{2}w^2+v^{2}w^2)+8uvw(u+v+w)$, c'est-é-dire, $y^4-2(u^2+v^2+w^2)y^2-8uvw(u+v+w)+(u^2+v^2+w^2)^2-4(u^{2}v^2+u^{2}w^2+v^{2}w^2)=0$. Il s'agit d'une identité pour $y=u+v+w$, nous allons la comparer avec l'équation $y^4+py^2+qy+r=0$, et nous allons déduire que si $(u,v,w)$ est une solution du système
\begin{equation*}
\begin{cases}
u^2+v^2+w^2=-\frac{p}{2} \\
u^{2}v^2+u^{2}w^2+v^{2}w^2=\frac{p^2}{16}-\frac{r}{4} \\ 
uvw=-\frac{q}{8},
\end{cases}
\end{equation*}
l'addition $u+v+w$ est une racine de l'équation initiale de quatrième degré. Nous prenons le carré de la dernière équation ainsi que le système devient: 
\begin{equation*}
\begin{cases}
u^2+v^2+w^2=-\frac{p}{2} \\
u^{2}v^2+u^{2}w^2+v^{2}w^2=\frac{p^2}{16}-\frac{r}{4} \\ 
u^{2}v^{2}w^2=\frac{q^2}{64}.
\end{cases}
\end{equation*}
Nous connaissons la somme des carrés, la somme des produits deux-à-deux et les produits des nombres $u^2$, $v^2$, $w^2$ en utilisant les formules de F. Viéte, ce sont les solutions de l'équation de troisième degré $z^3+\frac{p}{2}z^2+(\frac{p^2}{16}-\frac{r}{4})z-\frac{q^2}{64}=0$ qui est nommé la résolvante de l'équation de quatrième degré $y^4+py^2+qy+r=0$. Une fois que nous avons obtenu les racines $u^2$, $v^2$, $w^2$, algébrique de la dernière équation, on va faire l'extraction des racines carrés, pour obtenir les valeurs de $u$, $v$ et $w$. Les signes des racines carrés doivent être choisi au fin de satisfaire la troisième condition du système. Soient $u_0,v_0,w_0$ les valeurs des inconnues $u,v,w$ telles que $u_{0}v_{0}w_0=-\frac{q}{8}$, alors $u_0+v_0+w_0$ est une racine de l'équation $y^4+py^2+qy+r=0$ et toutes les autres racines se calculent à travers le changement de signe à deux des nombres $u_0,v_0,w_0$. C'est l'unique moyen pour que le produit $uvw$ soit égal à $-\frac{q}{8}$. Ainsi, les racines de l'équation sont les suivantes: 
\[x_1=u_0+v_0+w_0\]
\[x_2=u_0-v_0-w_0\]
\[x_3=-u_0+v_0-w_0\]
\[x_4=-u_0-v_0+w_0.\]
L'équation $y^4+py^2+qy+r=0$ est supposée d'avoir $p,q$ et $r$ réels, $q\neq 0$ et de ne pas avoir des racines multiples et on va se référer à l'équation $z^3+\frac{p}{2}z^2+(\frac{p^2}{16}-\frac{r}{4})z-\frac{q^2}{64}=0$. Au moment où le résolvent a une racine réelle et deux racines complexes et conjuguées. Si le résolvant a trois racines réelles et positives, l'équation de quatrième degré a quatre solutions réelles et distinctes. Autrement, si le résolvant a trois racines réelles, une positive et deux négatives, alors l'équation initiale a quatre racines à deux-à-deux complexes et conjuguées. Finalement, le cas avec $q=0$ de l'équation $y^4+py^2+qy+r=0$ prend la forme $y^4+py^2+r=0$ qui s'appelle l'équation biquadratique dont la résolution est traéable à celle de l'équation de deuxième degré avec le replacement $y^2=x$. Pour connaitre majeurs détails sur la résolution de l'équation de quatrième degré avec la méthode d'Euler, regardez \cite{Nickalls}.
\section{La stratégie d'Evariste Galois}
Nous allons donner des résultats sur la résolution des équations de majeur degré. Pour la résolution des équations de cinquième degré, les mathématiciens cherchaient à la résolution de la formule par moyen des radicaux, similairement à ceux qui valent pour les équations jusqu'au quatrième degré, c'est-à-dire une formule qui soit l'expression des solutions par moyen d'opérations rationnelles et d'extractions des racines carrées de différents indexes. 
Les équations algébriques de cinquième et majeur degré furent étudiées, pendant deux siècles et plus, aprés Tartaglia et Cardano, sans que toutes les variétés des épreuves faites pour les résoudre des façons similaires à ces de deuxième ou troisième ou quatrième degré portaient les résultats espérés et les mathématiciens commençaient à penser que une formule pouvait ne pas exister.
En 1770, Waring, Vandermonde et Lagrange, de manière indépendante l'un de l'autre, publiaient leurs résultats: les deux premiers étaient de pensée pessimiste tant que Lagrange était de pensée optimiste; en effet, Euler trouvait une grande classe d'équations de cinquième degré qui pourrait être résolue par moyen de radicaux. 
Une réponse définitive est donnée surtout à travers la théorie de Galois qui implique la connaissance des extensions algébrique. Nous nous référons à \cite{Artin}, \cite{Lang} pour les détails. La résolution des équations de troisième et quatrième degré, en utilisant la théorie de Galois et quelques simple analyse de Fourier pour les groupes finis, a été accomplie en \cite{Janson}. Lire aussi \cite{Cox} pour les approfondissements.
\vspace{25pt}
\begin{theorem}[Théorème de Galois]
\label{thm1}
Soit $f(x)\in \mathbb{Q}[x]$ irréductible, avec les racines $r_1,...,r_n$. Soit $L=\mathbb{Q}(r_1,...,r_n)$. Si l'équation $f(x)=0$ est résoluble par moyen de radicaux, le groupe de Galois de $f$, $\textit{Gal}(L/\mathbb{Q})$, est un group résoluble.
\end{theorem}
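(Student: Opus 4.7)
La stratégie consiste à traduire la notion "résoluble par radicaux" en termes galoisiens et à exploiter la correspondance fondamentale. Je commencerais par formaliser l'hypothèse: dire que $f(x)=0$ est résoluble par radicaux signifie qu'il existe une tour d'extensions
\[ \mathbb{Q} = K_0 \subset K_1 \subset \cdots \subset K_m \]
telle que $L \subseteq K_m$ et que chaque étape s'écrit $K_{i+1}=K_i(\alpha_i)$ avec $\alpha_i^{n_i}\in K_i$ pour un certain entier $n_i\geq 1$. Appelons une telle tour une \emph{tour radicale}.

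L'étape clé consiste à rendre chaque palier de la tour galoisien à groupe abélien. Pour cela, soit $N$ un multiple commun des $n_i$ et $\zeta$ une racine primitive $N$-ième de l'unité; je considère la tour enrichie
\[ \mathbb{Q} \subset \mathbb{Q}(\zeta) \subset K_1(\zeta) \subset \cdots \subset K_m(\zeta). \]
L'extension $\mathbb{Q}(\zeta)/\mathbb{Q}$ est galoisienne de groupe abélien, isomorphe à un sous-groupe de $(\mathbb{Z}/N\mathbb{Z})^\times$. Pour les paliers suivants, $K_i(\zeta)$ contient les racines $n_i$-ièmes de l'unité et la théorie de Kummer affirme que $K_{i+1}(\zeta)/K_i(\zeta)$ est galoisienne cyclique. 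Je prendrais ensuite la clôture galoisienne $M/\mathbb{Q}$ de $K_m(\zeta)$, obtenue en adjoignant les conjugués des $\alpha_i$, de sorte que $M$ reste atteint par une tour radicale du même type.

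La correspondance de Galois convertit cette tour en une chaîne emboîtée
\[ \{e\} = H_{m+1} \subset H_m \subset \cdots \subset H_1 \subset H_0 = \mathrm{Gal}(M/\mathbb{Q}), \]
où $H_i = \mathrm{Gal}(M/K_{i-1}(\zeta))$, chaque $H_{i+1}$ étant normal dans $H_i$ (car l'extension intermédiaire est galoisienne) avec quotient $H_i/H_{i+1}$ abélien. Par définition, $\mathrm{Gal}(M/\mathbb{Q})$ est donc résoluble. Comme $L/\mathbb{Q}$ est galoisienne ($L$ est corps de décomposition du polynôme séparable $f$) et $L\subseteq M$, la suite exacte
\[ 1 \to \mathrm{Gal}(M/L) \to \mathrm{Gal}(M/\mathbb{Q}) \to \mathrm{Gal}(L/\mathbb{Q}) \to 1 \]
exhibe $\mathrm{Gal}(L/\mathbb{Q})$ comme quotient d'un groupe résoluble, donc comme groupe résoluble.

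La difficulté principale se cache dans l'étape d'enrichissement par les racines de l'unité: il faut simultanément conserver le caractère radical de la tour, garantir la galoisianité de chaque palier et ne pas détruire l'inclusion $L\subseteq K_m$. La vérification que la clôture galoisienne d'une tour radicale reste radicale -- en adjoignant tous les conjugués des $\alpha_i$ sous l'action des plongements dans une clôture algébrique -- est technique mais indispensable pour appliquer proprement la correspondance de Galois.
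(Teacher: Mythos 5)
Votre proposition est correcte et suit essentiellement la même voie que la démonstration du papier: on adjoint une racine de l'unité pour rendre chaque palier radical galoisien à quotient cyclique (donc abélien), on en déduit que le groupe de Galois de la grande extension radicale normale est résoluble, puis on obtient $\mathrm{Gal}(L/\mathbb{Q})$ comme quotient de ce groupe résoluble. Vous êtes même plus explicite que le papier sur le point délicat du passage à une clôture galoisienne qui reste radicale, alors que le papier se contente de supposer $R_t/\mathbb{Q}$ normale sans justification.
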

\begin{proof}
Si $f$ est résoluble par moyen des radicaux, nous avons $\mathbb{Q}\subset R_1\subset...\subset R_t$ où chaque $R_i/R_{(i-1)}$, pour $1\leq i\leq t$, c'est une $p_i$-extension pure: $R_i=R_{(i-1)}a_{i}$ avec ${a_i}^{p_i}=\vartheta _{i}\in R_{(i-1)}$ et oé $L=\mathbb{Q}(r_1,...,r_m)\subset R_t$. On assume $R_t/\mathbb{Q}$ être normal. L'expression polynomiale $X^{p_i}-\vartheta _{i}\in R_{(i-1)}[X]R_t[X]$ a une racine $a_{i}$ en $R_t$, dont toutes ses racines sont en $R_t$ étant que $R_t/R_{(i-1)}$ est normal aussi que $\frac{R_t}{\mathbb{Q}}$. Les racines sont $\alpha_{i}a_i,...,{\alpha_{i}}^{p_i}a_i=a_i$ où $\alpha_i$ est une racine primitive $p$-ème de l'unité. $\alpha_i=\frac{\alpha_{i}(a_i)}{a_i}$ est dans $R_t$. Soient $n=p_{1}p_{2}...p_{t}$ et $\alpha$ une racine primitive de l'unité. Nous voulons modifier la relation initiale en la manière suivante: $\mathbb{Q}\subset \mathbb{Q}(\alpha)=K_0\subset K_1=R_1(\alpha)\subset ... \subset K_t=R_t(\alpha)=R_t (\alpha \in R_t)$.
\begin{description}
\item[(a)] Nous allons démontrer que $G_0=\textit{Gal}\left( \frac{K_t}{K_0}\right)$ est résoluble. Nous considérons $K_{i-1}\subset K_i\subset K_t$. Toutes les extensions sont normales: $\frac{K_t}{K_{i-1}}$ et $\frac{K_t}{K_i}$ comme $\frac{K_t}{\mathbb{Q}}$ est normale; $\frac{K_i}{K_{i-1}}$ comme $K_{i-1}=R_{i-1}(\alpha)$ contient une racine primitive p-ème de l'unité (si $\beta=\alpha^{(p_{1}...\hat{p}...p_t)}$ alors $\beta$ est une racine primitive p-ème de l'unité). Nous avons: $G_i=\textit{Gal}\left(\frac{K_t}{K_i}\right)\triangleleft \textit{Gal}\left(\frac{K_t}{K_{i-1}}\right)=G_{i-1}$ et $\frac{G_{i-1}}{G_i}\simeq \textit{Gal}\left(\frac{K_i}{K_{i-1}}\right)$. Mais, $\textit{Gal}\left(\frac{K_i}{K_{i-1}}\right)$ est cyclique d'ordre $p_i$. Alors, nous avons: $G_0=\textit{Gal}\left(\frac{K_t}{K_0}\right)\supset G_1=\textit{Gal}\left(\frac{K_t}{K_1}\right)\supset ... \supset G_{t-1}=\textit{Gal}\left(\frac{K_t}{K_{t-1}}\right)\supset G_t=\textit{Gal}\left(\frac{K_t}{K_t}\right)={1}$, où chaque $G_i$ est un sous-groupe normal de $G_{i-1}$ et chaque $\frac{G_{i-1}}{G_i}$ est cyclique d'ordre un nombre premier. Donc, $G_0$ est résoluble.
\item[(b)] $\textit{Gal}\left( \frac{K_t}{\mathbb{Q}}\right)$ est résoluble. Considérons $\mathbb{Q}\subset K_0=\mathbb{Q}\left( \alpha \right)K_t$. Toutes les extensions sont normales. Donc, $G_0\triangleleft G=\textit{Gal}\left( \frac{K_t}{\mathbb{Q}}\right)$ et le quotient est $\textit{Gal}\left( \frac{\mathbb{Q}_{0}\left( \alpha \right)}{\mathbb{Q}}\right)$. Ce dernier groupe est abélien, c'est-à-dire résoluble et $G$ est lui-même résoluble pour le point $(a)$.
\item[(c)] $\textit{Gal}\left( \frac{L}{\mathbb{Q}}\right)$ est résoluble. Nous considérons $\mathbb{Q}\subset L=\mathbb{Q}\left(r_1,...,r_n\right)\subset K_t$. Toutes les extensions sont normales et $\textit{Gal}\left(\frac{K_t}{L}\right)\triangleleft \textit{Gal}\left(\frac{K_t}{\mathbb{Q}}\right)=G$ et le quotient est $\textit{Gal}\left(\frac{L}{\mathbb{Q}}\right)$. Alors, $\textit{Gal}\left(\frac{L}{\mathbb{Q}}\right)$ est un quotient du groupe résoluble $\textit{Gal}\left(K_t/\mathbb{Q}\right)$ et $\textit{Gal}\left(\frac{L}{\mathbb{Q}}\right)$ est résoluble.
\end{description}
\end{proof}
Nous voudrions démontrer la deuxième partie du théorème de Galois: si $\textit{Gal}(f)$ est un groupe résoluble, alors $f(x)=0$ est résoluble par moyen de radicaux.
\begin{proposition}\label{prop1}
Soit $L/K$ une extension normale avec $|\textit{Gal}(L/K)|=p$ et $p$ un nombre premier. Si $K$ contient une racine primitive $p$-ème de l'unité alors $L=K(\beta)$ avec $\beta ^{p}\in K$ c'est-à-dire $L/K$ est une extension $p$-pure.
\end{proposition}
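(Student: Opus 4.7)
Le plan est de construire explicitement l'élément $\beta$ en utilisant la méthode du résolvant de Lagrange. Puisque $|\textit{Gal}(L/K)|=p$ est premier, le groupe de Galois est cyclique, engendré par un certain automorphisme $\sigma$. Soit $\zeta \in K$ une racine primitive $p$-ème de l'unité (qui existe par hypothèse). Pour chaque $\alpha \in L$, je définirais le résolvant de Lagrange
\[ \beta_{\alpha} = \sum_{i=0}^{p-1} \zeta^{-i}\,\sigma^{i}(\alpha). \]

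La première étape serait de calculer $\sigma(\beta_{\alpha})$. En appliquant $\sigma$ terme à terme, en remarquant que $\sigma$ fixe $\zeta^{-i} \in K$, puis en réindexant avec $\sigma^{p}=\mathrm{id}$ et $\zeta^{-p}=1$, on obtient $\sigma(\beta_{\alpha})=\zeta\,\beta_{\alpha}$. Il en résulte immédiatement que $\sigma(\beta_{\alpha}^{p})=\zeta^{p}\beta_{\alpha}^{p}=\beta_{\alpha}^{p}$. Comme $\sigma$ engendre $\textit{Gal}(L/K)$ et que $L/K$ est galoisienne (normale, et séparable comme extension de corps de caractéristique zéro via $\mathbb{Q}$, ou bien séparable car $p \nmid [L:K]$ dans la situation considérée), l'élément $\beta_{\alpha}^{p}$ est fixé par tout le groupe de Galois et appartient donc à $K$.

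La deuxième étape, qui constitue à mes yeux le vrai obstacle, est de montrer qu'il existe un choix de $\alpha$ tel que $\beta_{\alpha}\neq 0$. J'invoquerais ici le théorème d'indépendance linéaire des caractères de Dedekind--Artin: les automorphismes distincts $1,\sigma,\sigma^{2},\ldots,\sigma^{p-1}$ sont $L$-linéairement indépendants en tant qu'applications $L\to L$. Les coefficients $\zeta^{-i}$ étant tous non nuls, l'opérateur $K$-linéaire $\sum_{i=0}^{p-1}\zeta^{-i}\sigma^{i}$ n'est pas identiquement nul sur $L$, ce qui garantit l'existence d'un $\alpha\in L$ avec $\beta_{\alpha}\neq 0$. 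Sans ce théorème, on pourrait penser à tort que tous les résolvants s'annulent.

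Pour conclure, je poserais $\beta=\beta_{\alpha}$ pour un tel $\alpha$. Puisque $\sigma(\beta)=\zeta\beta$ et $\zeta\neq 1$, $\beta$ n'est pas fixé par $\sigma$ et donc $\beta\notin K$. On a alors $K\subsetneq K(\beta)\subseteq L$, et comme $[L:K]=p$ est premier, l'inclusion intermédiaire force $K(\beta)=L$. Puisque $\beta^{p}\in K$ par la première étape, on a bien exhibé $L$ comme $K(\beta)$ avec $\beta^{p}\in K$, c'est-à-dire $L/K$ est une extension $p$-pure au sens de la démonstration du théorème \ref{thm1}.
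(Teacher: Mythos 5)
Votre démonstration est correcte et repose, comme celle de l'article, sur le résolvant de Lagrange : on construit $\beta$ vérifiant $\sigma(\beta)=\zeta\beta$, on en déduit $\sigma(\beta^{p})=\beta^{p}$ donc $\beta^{p}\in K$, puis l'argument de degré ($[L:K]=p$ premier et $\beta\notin K$) force $K(\beta)=L$. La vraie différence se situe au point délicat, la non-nullité du résolvant. L'article applique le résolvant à un élément primitif $v$ de $L/K$ (théorème de l'élément primitif) et fait varier la racine de l'unité $\varsigma$ : si $t(\varsigma)=0$ pour toute $\varsigma\neq 1$, la somme des $p$ équations du système donne $v=\frac{1}{p}t(1)$, d'où $\sigma(v)=v$ et $v\in K$, en contradiction avec $L=K(v)$. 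Vous fixez au contraire la racine primitive $\zeta$ et faites varier l'argument $\alpha$ du résolvant, en invoquant l'indépendance linéaire des caractères de Dedekind--Artin pour garantir que l'opérateur $\sum_{i=0}^{p-1}\zeta^{-i}\sigma^{i}$ n'est pas identiquement nul. Les deux voies sont légitimes : celle de l'article n'utilise que l'élément primitif et une sommation élémentaire, la vôtre se passe de l'élément primitif mais s'appuie sur l'indépendance des caractères et correspond à la présentation usuelle de la théorie de Kummer (dans l'esprit de Hilbert 90). Un petit lapsus dans une remarque annexe : justifier la séparabilité en disant que $p$ ne divise pas $[L:K]$ n'a pas de sens, puisque $[L:K]=p$ ; l'observation correcte est que la présence d'une racine primitive $p$-ème de l'unité dans $K$ exclut la caractéristique $p$, de sorte qu'une extension normale de degré $p$ est automatiquement séparable (et, dans le contexte de l'article, tout est de caractéristique nulle), ce qui ne compromet pas le reste de votre argument.
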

\begin{proof}
$L=K(v)$ et $v\notin K$ pour le théorème de l'élément primitif. Soit $\alpha \in K$ une racine primitive $p$-ème de l'unité $(\alpha \neq 1)$. Nous voulons remarquer que $K$ contient toutes les racines primitive $p$-ème que ce sont $\alpha,\alpha ^2,...,\alpha ^p=1$. L'hypothèse que $|\textit{Gal}(L/K)|=p$ implique que $\textit{Gal}(L/K)$ soit cyclique et que chaque élément différent de l'identité soit un générateur: $\textit{Gal}(L/K)=\left\{ \sigma,\sigma ^2,...,\sigma ^p=Id\right\}$. Pour toute $\varsigma$, racine $p$-ème de l'unité, nous avons: $t(\varsigma)=v+\varsigma(v)+\varsigma ^{2}\sigma ^{2}(v)+...+\varsigma ^{p-1}\sigma ^{p-1}(v)$. 
\begin{description}
\item[(a)] Nous avons: $\sigma(t(\varsigma))=\varsigma ^{-1}t(\varsigma)$. En effet, $\sigma(t(\varsigma))=\sigma(v)+\varsigma \sigma ^{2}(v)+...+\varsigma ^{p-1}(v)$ et nous allons conclure que $\varsigma ^{-1}=\varsigma ^{p-1}$. \\
\item[(b)] $\exists \varsigma \neq 1$ avec $t(\varsigma)\neq 0$. Nous avons: \\
\end{description}
\vspace{5pt}
\begin{equation*}
\begin{cases}
v+\sigma(v)+\sigma ^{2}(v)+...+\sigma ^{p-1}(v)=t(1) \\
v+\alpha \sigma(v)+\alpha ^2 \sigma ^{2}(v)+...+\alpha ^{p-1}\sigma ^{p-1}(v)=t(\alpha) \\
... \\
v+\alpha ^{i}\sigma(v)+\alpha ^{2i}\sigma ^{2}(v)+...+\alpha ^{i(p-1)}\sigma ^{p-1}(v)=t(\alpha ^{i}) \\
... \\
v+\alpha ^{n-1}(v)+\alpha ^{2(n-1)}\sigma ^2(v)+...+\alpha ^{(p-1)^2}\sigma ^{p-1}(v)=t(\alpha ^{p-1}).
\end{cases}
\end{equation*}
Si $t(\alpha)=...=t(\alpha ^{p-1})=0$, alors $v=\frac{1}{p}t(1)$. Donc, $\sigma(v)=\frac{1}{p}\sigma(t(1))$ et $\sigma(v)=v$. Cela implique que $\sigma ^{i}(v)=v$ et que $v\in K$, le champ fixé de $\textit{Gal}(L/K)$, comme $L/K$ est normal; mais il s'agit d'une contradiction avec $L=K(v)$.
Soit $\beta=t(\varsigma)$, $\varsigma \neq 1$ et $t(\varsigma)\neq 0$. On peut écrire $\sigma(\beta ^{p})=\varsigma ^{-1}\beta$, comme au point $(a)$, donc $\beta \notin K$ parce que $\varsigma \neq 1$ et $\sigma(\beta)\neq \beta$. En outre, $\sigma(\beta ^{p})=\sigma(\beta)^{p}=(\varsigma ^{-1}\beta)^p=\beta ^{p}$. Donc, $\sigma ^{i}(\beta ^{p})=\beta ^{p}$ et il implique que $\beta ^{p} \in K$. Nous avons $K\subset K(\beta)\subset L$ et $p=\dim_{K}(L)=\dim_{K_(\beta)}L \cdot \dim_{K}K(\beta)$, mais $\dim_{K}K(\beta)>1$ avec $\beta \notin K$, $\dim_{K}K(\beta)=p$ et $K(\beta)=L$ nécessairement.
\end{proof}
Avant de démontrer un deuxième résultat de E. Galois, nous avons ce qui suit.
\begin{proposition}[Irrationalités naturelles.]
Soit $f(x)\in K[x]$ une polynomiale irréductible. Nous allons dénoter avec $M=K(r_1,...,r_n)$ le champ de séparation de $f$ dédans $K$. Soit $L/K$ une extension et $N=L(r_1,...,r_n)$ le champ de séparation de $f$ dédans $L$. Alors, $\textit{Gal}(N/L)$ est isomorphique au sous-groupe $\textit{Gal}(M/ML)$ de $\textit{Gal}(M/K)$.
\end{proposition}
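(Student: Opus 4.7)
Je comprends l'énoncé $\textit{Gal}(M/ML)$ comme $\textit{Gal}(M/(M\cap L))$, selon la formulation classique du théorème des irrationalités naturelles. Le plan est de construire l'application de restriction $\varphi : \textit{Gal}(N/L) \to \textit{Gal}(M/K)$ définie par $\sigma \mapsto \sigma|_{M}$, de vérifier que $\varphi$ est un homomorphisme de groupes injectif, puis d'identifier son image avec $\textit{Gal}(M/(M\cap L))$. Je supposerai $f$ séparable, afin que les extensions $M/K$ et $N/L$ soient galoisiennes, ce qui permet d'invoquer la correspondance galoisienne.

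D'abord je vérifierais que $\varphi$ est bien définie. Comme $M=K(r_1,\ldots,r_n)$ est un corps de décomposition du polynôme irréductible $f$ sur $K$, l'extension $M/K$ est normale. Tout $\sigma\in\textit{Gal}(N/L)$ fixe $K\subseteq L$ et permute les racines $r_1,\ldots,r_n$ de $f$ dans $N$; par conséquent $\sigma(M)\subseteq M$ et la restriction $\sigma|_M$ appartient bien à $\textit{Gal}(M/K)$. Le fait que $\varphi$ soit un morphisme de groupes est évident. Pour l'injectivité, si $\sigma|_M=\text{id}_M$, alors $\sigma$ fixe chaque $r_i$; comme $\sigma$ fixe aussi $L$ et que $N=L(r_1,\ldots,r_n)$, il vient $\sigma=\text{id}_N$, donc $\ker\varphi$ est trivial.

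Le pas crucial, et principal obstacle, consiste à identifier l'image $H:=\varphi(\textit{Gal}(N/L))$ avec $\textit{Gal}(M/(M\cap L))$. L'inclusion $H\subseteq \textit{Gal}(M/(M\cap L))$ est immédiate : chaque $\sigma|_M$ fixe $M\cap L\subseteq L$. Pour l'égalité, j'appliquerais la correspondance galoisienne à l'extension galoisienne $M/K$ : il suffit de montrer que le corps fixe de $H$ dans $M$ est précisément $M\cap L$. Soit $x\in M$ fixé par tout élément de $H$; alors $x$ est fixé par tout $\sigma\in\textit{Gal}(N/L)$, et comme $N/L$ est galoisienne son corps fixe est $L$, ce qui donne $x\in L$, puis $x\in M\cap L$. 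Réciproquement, tout élément de $M\cap L$ est trivialement fixé par $H$. La correspondance galoisienne fournit alors $H=\textit{Gal}(M/(M\cap L))$, et joint à l'injectivité de $\varphi$, on obtient l'isomorphisme $\textit{Gal}(N/L)\simeq \textit{Gal}(M/(M\cap L))$ annoncé, réalisé comme sous-groupe de $\textit{Gal}(M/K)$.
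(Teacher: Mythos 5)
Votre démonstration est correcte et suit essentiellement la même démarche que celle de l'article : l'homomorphisme de restriction $\sigma\mapsto\sigma|_{M}$ (bien défini grâce à la normalité de $M/K$), l'injectivité par fixation des racines et de $L$, puis l'identification de l'image avec $\textit{Gal}(M/(M\cap L))$ en montrant que son corps fixe est $M\cap L$ et en invoquant la correspondance de Galois. Votre interprétation de la coquille $\textit{Gal}(M/ML)$ comme $\textit{Gal}(M/(M\cap L))$ et l'hypothèse explicite de séparabilité sont conformes à l'intention du texte.
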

\begin{proof}
Nous avons le diagramme suivant:
\vspace{300pt}

\begin{figure}
\includegraphics[scale=0.7]{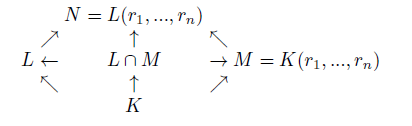}
\end{figure}
Nous allons définir $:Aut_{L}(N)Aut_{L\cap M}(M),\sigma \mapsto \sigma_{|M}\in Aut(M)$. Nous allons démontrer que $\psi$ est bien définie. Clairement, la restriction de $\sigma$ est un K-morphisme $\sigma_{|M}:M\rightarrow N\subset \Omega$. $M/K$ est normal, $\sigma_{|M}(M)=M$ et $\sigma_{|M}\in Aut(M)$. En outre, $\sigma_{|_{M\cap L}}=Id$ et $\psi$ est bien définie. Ce n'est pas difficile à démontrer que $\psi$ est un morphisme de groupes. 
\begin{description} 
\item[a] $\psi$ est injectif: si $\psi(\sigma)=Id$, alors $\sigma_{|M}:K(r_1,...,r_n )\rightarrow M=K(r_1,...,r_n)$ est telle que $\sigma(r_i)=r_i$ impliquant $\sigma=Id_L$.
\item[b] $\psi$ est surjectif: soit $G^{'}=(\textit{Gal}(N/L)), G^{'}$ est un sous-groupe de $\textit{Gal}(M/M\cap L)$. Soit $F=M^(G^{'})$ le sous-champ de $M$ fixé par $G$. Nous avons: $M\cap L\subset F$. Si $x\in M$, $x\notin M\cap L$ et $L$ est le champ fixé de $\textit{Gal}(N/L)$ parce que $N/L$ est normal, il existe $\tau \in Aut_{L}(N)$ tel que $\tau(x)\neq x$. Alors, $\psi(\tau(x))\neq x$ et on va conclure que $F=M\cap L$ et $G^{'}=\textit{Gal}(\frac{M}{M\cap L})$ comme $M/K$ est normal par  la correspondance de Galois.
\end{description}
\end{proof}
On va démontrer le deuxième théorème de Galois.
\begin{theorem}
\label{thm2}
Soit $f(x)\in \mathbb{Q}[x]$ une polynomiale irréductible. Soit $L=\mathbb{Q}(r_1,...,r_n)$ le champ de séparation de $f$ dédans $\mathbb{Q}$. Si \textit{Gal}$\left( \frac{L}{\mathbb{Q}}\right)$ est résoluble alors l'équation $f(x)=0$ est résoluble par moyen de radicaux.
\end{theorem}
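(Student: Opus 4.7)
Le plan est d'inverser la stratégie développée dans la démonstration du Théorème \ref{thm1} : partant d'une suite de composition du groupe résoluble $G=\textit{Gal}(L/\mathbb{Q})$, je construirais, via la correspondance de Galois, une tour de sous-champs à étages normaux de degré premier, puis je montrerais que chaque étage peut se réaliser comme extension pure en invoquant la Proposition \ref{prop1}.

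Plus précisément, $G$ étant fini et résoluble, il admet une suite $G=G_0\supset G_1\supset\ldots\supset G_t=\{1\}$ où chaque $G_i$ est distingué dans $G_{i-1}$ avec quotient $G_{i-1}/G_i$ cyclique d'ordre un premier $p_i$. Je poserais $n=p_1 p_2\cdots p_t$ et prendrais $\alpha$ une racine primitive $n$-ème de l'unité. La première étape consisterait à considérer l'extension $L(\alpha)/\mathbb{Q}(\alpha)$, encore normale puisque $L/\mathbb{Q}$ l'est. D'après la Proposition des irrationalités naturelles (appliquée à $f$, $K=\mathbb{Q}$ et à l'extension $\mathbb{Q}(\alpha)/\mathbb{Q}$), $\textit{Gal}(L(\alpha)/\mathbb{Q}(\alpha))$ s'identifie à un sous-groupe de $G$, donc reste résoluble.

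La deuxième étape serait d'appliquer à ce nouveau groupe résoluble une suite de composition analogue : par la correspondance de Galois, j'obtiendrais une tour $\mathbb{Q}(\alpha)=K_0\subset K_1\subset\ldots\subset K_s=L(\alpha)$ dans laquelle chaque $K_i/K_{i-1}$ est normale de degré un premier $q_i$ divisant $n$. Comme $\mathbb{Q}(\alpha)\subset K_{i-1}$ contient par construction une racine primitive $q_i$-ème de l'unité, la Proposition \ref{prop1} donnerait $K_i=K_{i-1}(\beta_i)$ avec $\beta_i^{q_i}\in K_{i-1}$, c'est-à-dire une extension $q_i$-pure.

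Enfin, je concaténerais avec la tour $\mathbb{Q}\subset\mathbb{Q}(\alpha)$, qui s'obtient elle-même par adjonctions successives de racines primitives de l'unité d'ordres premiers et forme donc aussi une tour d'extensions pures. L'ensemble ferait de $L(\alpha)$ l'aboutissement d'une suite d'extensions pures au-dessus de $\mathbb{Q}$, ce qui entraînerait la résolubilité par radicaux de $f$, ses racines étant toutes dans $L\subset L(\alpha)$. L'obstacle principal attendu est précisément le passage par $\mathbb{Q}(\alpha)$ : il faut à la fois que l'adjonction de $\alpha$ préserve la résolubilité du groupe de Galois (ce qui sera assuré par les irrationalités naturelles) et qu'elle fournisse assez de racines de l'unité pour que la Proposition \ref{prop1} s'applique à chaque étage ultérieur. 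Sans cette précaution initiale, rien ne garantirait que les étages de la tour donnée directement par la correspondance de Galois au-dessus de $\mathbb{Q}$ soient des extensions pures.
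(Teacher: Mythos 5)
Votre proposition est correcte et suit essentiellement la même démarche que la démonstration de l'article : adjonction d'une racine primitive de l'unité d'ordre $n=p_1\cdots p_t=|\textit{Gal}(L/\mathbb{Q})|$ (l'article prend directement $k=|\textit{Gal}(L/\mathbb{Q})|$), identification du groupe $\textit{Gal}(L(\alpha)/\mathbb{Q}(\alpha))$ à un sous-groupe résoluble via les irrationalités naturelles, passage par la correspondance de Galois à une tour d'extensions normales de degré premier, application de la Proposition~\ref{prop1} à chaque étage grâce aux racines de l'unité déjà présentes, puis concaténation avec l'extension radicale $\mathbb{Q}(\alpha)/\mathbb{Q}$. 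Rien à redire : l'argument coïncide avec celui du texte, y compris l'usage du théorème de Lagrange pour garantir que chaque premier $q_i$ divise $n$.
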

\begin{proof}
Soit $k=|\textit{Gal}\frac{L}{\mathbb{Q}}|$ et $\epsilon$ une racine primitive $k$-ème de l'unité. Nous considérons $\frac{\mathbb{Q}\left( \epsilon \right)}{\mathbb{Q}}$: c'est une pure extension donc une extension radicale. Soit $N=\mathbb{Q}\left( \epsilon,r_1,...,r_n \right)$ et, en suivant la \ref{prop1}, $G=\textit{Gal}\frac{N}{\mathbb{Q}\left( \epsilon \right)}$ est isomorphe au sous-groupe de \textit{Gal}$\left( \frac{L}{\mathbb{Q}} \right)$ et $G$ est résoluble. Nous avons: 
\[G=G_0\supset G_1\supset ...\supset G_r={1}\]
avec $G_{i+1}\triangleleft G_i$ et $\frac{G_i}{G_{i+1}}$ est cyclique d'ordre un nombre premier. Nous avons, à travers la correspondance de Galois:
\[ \mathbb{Q}\left( \epsilon \right)=K_0\subset K_1\subset ... \subset K_r=N\]
où $\frac{K_{i+1}}{K_i}$ est normal avec \textit{Gal}$\left( \frac{K_{i+1}}{K_i}\right) \simeq \textit{Gal}\left( \frac{N}{K_{i}}\right)\textit{Gal}\left( \frac{N}{K_{i+1}}\right) \simeq \frac{G_i}{G_{i+1}}$. \textit{Gal}$\left( \frac{K_{i+1}}{K_i}\right)$ est cyclique d'ordre un nombre premier. Soit $d=|G|$ et $d|k$ (pour le théorème de Lagrange). $p=\textit{Gal}\left( \frac{K_{i+1}}{K_i}\right)$ divise $k$, comme $|G_i|,|G_{i+1}|$ divisent $d$, on écrit $pt=k$. Nous avons $\epsilon ^t \in K_0\subset K_i$ et $K_i$ contient une racine primitive $p$-ème de l'unité (c'est-à-dire $\beta=\epsilon ^t$). Cela implique que $K_{i+1}=K_{i}a_{i}$ avec $a_{i}^{p}\in K_i$. $\frac{N}{\mathbb{Q}\left( \epsilon \right)}$ est radicale ainsi que $\frac{N}{\mathbb{Q}}$ comme $\frac{\mathbb{Q}\left( \epsilon \right)}{\mathbb{Q}}$ est radical. Alors $r_1,...,r_n \in N$ et l'équation est résoluble par moyen des radicaux.
\end{proof}
Nous allons mettre ensemble le théorème \ref{thm1} et le théorème \ref{thm2} pour avoir le résultat suivant.
\begin{theorem}[Galois]
Soit $f(x)\in \mathbb{Q}[x]$ une polynomial irréductible. Soit $L=\mathbb{Q}(r_1,...,r_n)$ le champ de séparation de $f$ dédans $\mathbb{Q}$. L'équation $f(x)=0$ est résoluble par moyen des radicaux si et seulement si \textit{Gal}$\left( \frac{L}{\mathbb{Q}}\right)$ est un groupe résoluble.
\end{theorem}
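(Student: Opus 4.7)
Le plan de démonstration est particulièrement court, car l'énoncé n'est autre que la conjonction des théorèmes \ref{thm1} et \ref{thm2} déjà établis. Il suffit donc d'invoquer successivement les deux résultats pour obtenir l'équivalence voulue, sans introduire aucun argument nouveau.

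Plus précisément, je procéderais par double implication. Pour le sens direct, je suppose que $f(x)=0$ est résoluble par moyen de radicaux; le théorème \ref{thm1} fournit alors immédiatement que $\textit{Gal}(L/\mathbb{Q})$ est un groupe résoluble. Pour la réciproque, je suppose que $\textit{Gal}(L/\mathbb{Q})$ est résoluble; le théorème \ref{thm2}, démontré à l'aide de la proposition \ref{prop1} et de la proposition des irrationalités naturelles, garantit que $f(x)=0$ est résoluble par radicaux. Il convient simplement de vérifier que les hypothèses des deux théorèmes coïncident avec celles du présent énoncé, c'est-à-dire que $f\in\mathbb{Q}[x]$ est irréductible et que $L=\mathbb{Q}(r_1,\ldots,r_n)$ est le corps de décomposition de $f$ sur $\mathbb{Q}$; ce point est en effet manifeste dans les deux énoncés préalables.

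Aucune difficulté technique n'est à prévoir à ce stade, puisque tout le travail non trivial a déjà été accompli dans les démonstrations précédentes: l'adjonction d'une racine primitive de l'unité à $\mathbb{Q}$, la construction et la normalisation d'une tour d'extensions radicales, l'invocation répétée de la correspondance de Galois, l'exploitation du caractère cyclique d'ordre premier des quotients successifs, ainsi que le transport entre la tour de corps et la chaîne de sous-groupes via la proposition \ref{prop1}. Le présent énoncé est donc une reformulation synthétique du grand théorème de Galois, mettant en relation la propriété algébrique de résolubilité par radicaux et la propriété structurelle de résolubilité du groupe de Galois, et sa démonstration se réduit à une simple juxtaposition des deux implications déjà obtenues.
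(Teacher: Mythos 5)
Votre démonstration est correcte et suit exactement la démarche du texte, qui énonce ce théorème comme la simple conjonction du théorème \ref{thm1} (sens direct) et du théorème \ref{thm2} (réciproque), sans argument supplémentaire. La vérification que les hypothèses coïncident est bien le seul point à contrôler, et vous le signalez à juste titre.
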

Pour conclure le problème de la résolution des équations algébrique par moyen des radicaux nous avons besoin des résultats suivants.
\begin{theorem}[Abel]
L'équation générale de cinquième degré avec coefficients en $\mathbb{Q}$ n'est pas résoluble par moyen des radicaux.
\end{theorem}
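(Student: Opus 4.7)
Le plan est d'exploiter l'équivalence donnée par la réunion du Théorème \ref{thm1} et du Théorème \ref{thm2}: il suffit d'exhiber un polynôme irréductible $f(x)\in \mathbb{Q}[x]$ de degré $5$ tel que \textit{Gal}$(L/\mathbb{Q})$ ne soit pas résoluble, où $L=\mathbb{Q}(r_1,\ldots,r_5)$ est le corps de décomposition. Je rappellerais d'abord que le groupe de Galois d'un polynôme séparable irréductible de degré $n$ se plonge comme sous-groupe transitif de $S_n$ via son action sur les racines. Pour $n=5$, l'objectif est donc d'atteindre $S_5$ tout entier, puis de vérifier que $S_5$ n'est pas résoluble, ce qui fournira, via le Théorème \ref{thm1}, l'impossibilité de la résolution par radicaux.

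Pour l'exemple concret, je considérerais le polynôme $f(x)=x^{5}-4x+2$. Il est irréductible sur $\mathbb{Q}$ par le critère d'Eisenstein avec $p=2$. Une étude de $f'(x)=5x^{4}-4$ fait apparaître deux points critiques $\pm (4/5)^{1/4}$; l'analyse du signe de $f$ aux extrema et des limites à l'infini montre que $f$ admet exactement trois racines réelles et une paire de racines complexes conjuguées. Par conséquent, la conjugaison complexe restreinte à $L$ fournit un élément de \textit{Gal}$(L/\mathbb{Q})\subset S_{5}$ qui agit comme une transposition sur les racines. Par ailleurs, la transitivité de l'action et la formule orbite-stabilisateur entraînent que $5$ divise $|\textit{Gal}(L/\mathbb{Q})|$, puis le théorème de Cauchy fournit un élément d'ordre $5$, c'est-à-dire un $5$-cycle.

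L'étape suivante consiste à démontrer qu'un sous-groupe $H\leq S_{5}$ contenant un $5$-cycle $\tau$ et une transposition $\rho=(a,b)$ est nécessairement $S_{5}$ tout entier: en conjuguant $\rho$ par les puissances convenables de $\tau$ on obtient une transposition entre éléments adjacents dans le cycle engendré par $\tau$, et ces transpositions adjacentes engendrent classiquement $S_{5}$. J'en déduirais alors que \textit{Gal}$(L/\mathbb{Q})\simeq S_{5}$. Il reste à vérifier que $S_{5}$ n'est pas résoluble: sa série dérivée est $S_{5}\supset A_{5}\supset A_{5}\supset \cdots$, puisque $A_{5}$ est simple et non abélien, donc égal à son propre groupe dérivé $[A_{5},A_{5}]=A_{5}$. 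La série ne se stabilise jamais en $\{1\}$, ce qui contredit la définition de résolubilité, et le Théorème \ref{thm1} achève l'argument.

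L'obstacle principal sera l'étape analytique: établir rigoureusement que $f(x)=x^{5}-4x+2$ possède exactement trois racines réelles, car c'est précisément ce comptage qui produit la transposition cherchée dans le groupe de Galois. Tout le reste relève de faits standards de théorie des groupes (simplicité de $A_{5}$, génération de $S_{5}$ par un $5$-cycle et une transposition) et du critère de Galois déjà démontré. On peut également interpréter l'énoncé au sens du polynôme générique $X^{5}+a_{1}X^{4}+\cdots+a_{5}$ sur $\mathbb{Q}(a_{1},\ldots,a_{5})$, dont le groupe de Galois est $S_{5}$ par un argument de spécialisation; cette voie mène à la même conclusion en s'appuyant sur la non-résolubilité de $S_{5}$ déjà établie.
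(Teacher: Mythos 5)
Votre proposition suit essentiellement la même voie que l'article: exhiber le polynôme $f(x)=x^{5}-4x+2$, montrer que son groupe de Galois est $S_{5}$ et conclure par la non-résolubilité de $S_{5}$ via le Théorème \ref{thm1}. La seule différence est que vous détaillez ce que l'article se contente d'affirmer (irréductibilité par Eisenstein, trois racines réelles donnant une transposition, $5$-cycle par transitivité et Cauchy, génération de $S_{5}$, simplicité de $A_{5}$), et ces détails sont corrects.
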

\begin{proof}
Il est suffisant de retrouver une équation de cinquième degré avec le groupe de Galois non résoluble. Par exemple, on prend $S_5$ qui est non résoluble et si $f(x)=x^5-4x+2$, \textit{Gal}$_{\mathbb{Q}}F=S_5$.
\end{proof}
\begin{theorem}[Abel]
Soit $f(x)\in \mathbb{Q}[x]$ irréductible telle que \textit{Gal}$_{\mathbb{Q}}(f)$ est abélien, alors $f(x)=0$ est résoluble par moyen des radicaux.
\end{theorem}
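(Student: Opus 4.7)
Le plan est d'invoquer directement le Théorème \ref{thm2}: il suffira de vérifier qu'un groupe abélien fini est résoluble au sens utilisé dans les démonstrations précédentes, et le Théorème \ref{thm2} fournira alors la conclusion. L'essentiel du travail technique (la construction explicite de la tour d'extensions radicales à partir de la suite de composition du groupe de Galois) a déjà été effectué dans la démonstration du Théorème \ref{thm2}, donc il n'y a presque rien à faire de nouveau.

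La première étape consisterait à établir que tout groupe abélien fini $G$ admet une suite
\[ G = G_0 \supset G_1 \supset \cdots \supset G_r = \{1\} \]
avec $G_{i+1}\triangleleft G_i$ et $G_i/G_{i+1}$ cyclique d'ordre un nombre premier, ce qui correspond précisément à la notion de résolubilité utilisée dans la démonstration du Théorème \ref{thm1}. Je procéderais par récurrence sur $|G|$: si $|G|>1$, on choisit un nombre premier $p$ divisant $|G|$ et, par le théorème de Cauchy, un élément $g\in G$ d'ordre $p$; le sous-groupe cyclique $H=\langle g\rangle$ est automatiquement normal dans $G$ puisque $G$ est abélien, et le quotient $G/H$ reste abélien d'ordre strictement plus petit, auquel on applique l'hypothèse de récurrence avant de relever la suite obtenue via la projection canonique $G\to G/H$. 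On pourrait aussi invoquer le théorème de structure des groupes abéliens finis pour arriver à la même conclusion.

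Puisque par hypothèse \textit{Gal}$(L/\mathbb{Q})$ est abélien, l'étape précédente entraîne qu'il est résoluble; $f$ étant irréductible sur $\mathbb{Q}$ et $L$ étant son champ de séparation, les hypothèses du Théorème \ref{thm2} sont satisfaites et l'équation $f(x)=0$ est donc résoluble par moyen des radicaux. Il n'y a pas d'obstacle réel dans cette démonstration: l'unique observation non triviale est le fait classique ``abélien fini $\Rightarrow$ résoluble'', et tout le contenu mathématique profond est déjà encapsulé dans le Théorème \ref{thm2}.
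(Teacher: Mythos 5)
Votre démonstration est correcte et suit essentiellement la même voie que l'article: observer qu'un groupe abélien (fini) est résoluble, puis appliquer le Théorème \ref{thm2}. Vous explicitez simplement, par récurrence via le théorème de Cauchy, le fait « abélien $\Rightarrow$ résoluble » que l'article se contente de déclarer évident.
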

\begin{proof}
Clairement, chaque groupe de Galois est résoluble.
\end{proof}
Les groups commutatifs sont nommés abéliens en mémoire de ce résultat. \\
E. Galois écrivait la fin de la défie de retrouver les solutions des équations algébriques par moyen des radicaux et il recueillit ses idées dans le théorème suivant. En effet, il découvrait avant sa mort, en 1832, qu'il n'existait pas une formule résolvante pour une polynomiale générale de degré majeur de quatre. Toutefois, ils existent plusieurs méthodes pour approximer les racines de ces polynomiales.
\begin{theorem}[Galois]
Soit $f(x)\in \mathbb{Q}[x]$ une polynomial irréductible avec degré un nombre premier $p$. Soient $r_1,...,r_p$ de $f$ et $L=\mathbb{Q}(r_1,...,r_p)$ le champ de séparation. Alors, $f(x)=0$ est résoluble par moyen des radicaux si et seulement si $L=\mathbb{Q}(r_i,r_j)$, pour tous $i,j,i\neq j$ (cela signifie que, comme Galois disait, $f(x)=0$ est résoluble par moyen des radicaux si et seulement si toutes les racines sont des fonctions rationnelles de deux entre elles).
\end{theorem}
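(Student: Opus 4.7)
Le plan sera d'utiliser la correspondance de Galois pour traduire la condition sur les champs $L=\mathbb{Q}(r_i,r_j)$ en une condition purement group-théorique sur $G=\textit{Gal}(L/\mathbb{Q})$, puis d'appliquer les théorèmes \ref{thm1} et \ref{thm2} pour relier la résolubilité par radicaux à la résolubilité de $G$. Précisément, l'égalité $L=\mathbb{Q}(r_i,r_j)$ équivaut à $\textit{Gal}(L/\mathbb{Q}(r_i,r_j))=\{1\}$, c'est-à-dire au fait qu'aucun automorphisme non trivial de $L/\mathbb{Q}$ ne fixe à la fois $r_i$ et $r_j$. La condition ``pour tous $i\neq j$'' se traduit donc en: \emph{aucun élément non trivial de $G$ ne fixe deux racines distinctes}. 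Parallèlement, $f(x)=0$ est résoluble par radicaux si et seulement si $G$ est résoluble. La preuve se réduit ainsi au lemme group-théorique suivant: soit $G\leq S_p$ transitif, $p$ premier; alors $G$ est résoluble si et seulement si aucun élément non trivial de $G$ ne fixe deux points. La transitivité sur les $p$ racines provient ici de l'irréductibilité de $f$ sur $\mathbb{Q}$.

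Pour le sens direct du lemme, je considérerai un sous-groupe normal minimal $N$ de $G$: par résolubilité, $N$ est abélien élémentaire. Comme $p$ est premier, l'action de $G$ sur $\{r_1,\ldots,r_p\}$ est primitive (la taille des blocs doit diviser $p$), donc $N$ est transitif. Étant abélien et transitif, $N$ opère régulièrement, ce qui force $|N|=p$. Par suite $G/N$ se plonge dans $\textit{Aut}(N)\cong(\mathbb{Z}/p\mathbb{Z})^*$, et $G$ s'identifie à un sous-groupe du groupe affine $\textit{AGL}(1,\mathbb{F}_p)$ des transformations $x\mapsto ax+b$ avec $a\in\mathbb{F}_p^*,\,b\in\mathbb{F}_p$. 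Une telle application qui fixe deux points distincts force $a=1$ puis $b=0$, donc est l'identité, ce qui conclut.

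Pour la réciproque, soit $P$ un $p$-sous-groupe de Sylow de $G$, d'ordre $p$. La formule de Burnside appliquée à l'action transitive donne $\sum_{g\in G}|\textrm{Fix}(g)|=|G|$; comme l'identité contribue $p$ et chaque autre élément au plus $1$ par hypothèse, il existe exactement $p-1$ éléments non triviaux sans point fixe. Or les $p-1$ éléments non triviaux d'un $p$-Sylow sont tous des $p$-cycles, donc sans point fixe, ce qui impose l'unicité du $p$-Sylow, et donc $P\triangleleft G$. Alors $G/P$ se plonge dans $\textit{Aut}(P)\cong\mathbb{Z}/(p-1)\mathbb{Z}$, cyclique, et $G$ admet ainsi une suite normale à quotients cycliques, donc est résoluble.

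Le principal obstacle sera le sens direct du lemme group-théorique: identifier $G$ comme sous-groupe du groupe affine $\textit{AGL}(1,\mathbb{F}_p)$ à partir de l'existence d'un sous-groupe normal minimal abélien élémentaire et de la primitivité de l'action sur un nombre premier de points. La réciproque repose sur l'argument de comptage de Burnside, plus immédiat une fois ce cadre posé. La traduction initiale via la correspondance de Galois et l'invocation des théorèmes \ref{thm1} et \ref{thm2} sont en comparaison des étapes formelles directes.
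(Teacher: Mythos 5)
Un point de contexte d'abord : l'article énonce ce théorème sans aucune démonstration, il n'y a donc pas de preuve interne à laquelle comparer la vôtre ; votre proposition doit être évaluée en soi, et elle correspond à l'argument classique de Galois pour le degré premier. Elle est correcte dans ses grandes lignes. La réduction par la correspondance de Galois est légitime, à condition de remarquer que $L/\mathbb{Q}(r_i,r_j)$ est encore une extension normale (car $L$ est un corps de décomposition sur tout corps intermédiaire), de sorte que $\textit{Gal}(L/\mathbb{Q}(r_i,r_j))=\{1\}$ équivaut bien à $L=\mathbb{Q}(r_i,r_j)$ ; l'équivalence entre résolubilité par radicaux et résolubilité de $G=\textit{Gal}(L/\mathbb{Q})$ est exactement la conjonction des théorèmes \ref{thm1} et \ref{thm2}. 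Dans le sens direct de votre lemme, deux maillons doivent être explicités : d'une part, le fait qu'un sous-groupe normal non trivial d'un groupe primitif est transitif (ses orbites forment un système de blocs) ; d'autre part, le plongement $G/N\hookrightarrow \mathrm{Aut}(N)$ repose sur $C_G(N)=N$, conséquence du fait qu'un sous-groupe abélien régulier de $S_p$ est son propre centralisateur, et c'est ce même fait qui justifie l'identification de $G$ à un sous-groupe de $\mathrm{AGL}(1,\mathbb{F}_p)$, à savoir le normalisateur de $N$ dans $S_p$ (l'holomorphe de $\mathbb{Z}/p\mathbb{Z}$), une fois les racines identifiées à $\mathbb{F}_p$ via l'action régulière de $N$. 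Dans la réciproque, votre comptage de Burnside donne exactement $p-1$ éléments sans point fixe ; il faut ajouter que deux $p$-Sylow distincts s'intersectent trivialement (leur ordre est le nombre premier $p$), si bien qu'un second $p$-Sylow produirait plus de $p-1$ tels éléments, et que la conclusion que $G/P$ est cyclique utilise encore $C_G(P)=P$ par le même argument de régularité. Avec ces précisions, toutes standard, votre schéma fournit une démonstration complète d'un énoncé que l'article laisse sans preuve, et en constitue donc un véritable complément.
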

\section{Conclusion}
Nous avons souligné l'importance des résultats obtenues par algébristes italiens dans leurs chef d'oeuvres sur la résolution des équations algébriques de troisième degré qui portaient à leur résolution et à la découverte des nombres complexes qui ont donné l'harmonie en la théorie des équations algébriques. Nous avons reporté des méthodes classique pour la résolution des équations de quatrième degré. Finalement, on a fixé l'attention aux travaux d'E. Galois qui a donné une première application de la théorie des groupes dans un sens moderne, pour importer l'ancien problème regardant les équations algébriques, les méthodes et les techniques pour établir quand, comment et les cas d'applications d'une équation donnée qui puisse être résolue en utilisant les radicaux.
\vspace{50pt}
\section{Déclaration}
L'autrice affirme que des conflits d'intêret n'existent aucuns.
\vspace{50pt}
\renewcommand\refname{8. Références}

\end{document}